\newtheorem{theorem}{Theorem}
\newtheorem{lemma}{Lemma}
\theoremstyle{remark}
\theoremstyle{definition}
\begin{document}

\begin{frontmatter}

\title{Algebraic polynomials and moments of stochastic integrals}
\runtitle{Estimates for stochastic integrals}



\begin{aug}
\author{\fnms{Mikhail}
\snm{Langovoy}\corref{}\ead[label=e1]{langovoy@eurandom.tue.nl}}


\affiliation{
         EURANDOM, Eindhoven University of Technology,\\
         Eindhoven, The Netherlands.}

\address{
EURANDOM, Technische Universiteit Eindhoven,  \\
Postbus 513, \\
Den Dolech 2,\\
5600 MB  Eindhoven, The Netherlands\\
\printead{e1}\\
Phone: (+31) (40) 247 - 8113\\
Fax: (+31) (40) 247 - 8190\\}

\runauthor{M. Langovoy}
\end{aug}

\begin{abstract}
We propose an algebraic method for proving estimates on moments of stochastic integrals. The method uses qualitative properties of roots of algebraic polynomials from certain general classes. As an application, we give a new proof of a variation of the Burkholder-Davis-Gundy inequality for the case of stochastic integrals with respect to real locally square integrable martingales. Further possible applications and extensions of the method are outlined.\\
\end{abstract}

\begin{keyword}[class=MSC]
MSC 2000 Subject Classifications: \kwd[ Primary ]{60H05} \kwd{60E15} \kwd[; secondary ]{60G44}  \kwd{33D45} \kwd{26C10}
\kwd{62E20}
\end{keyword}
\smallskip

\begin{keyword}
\kwd{Stochastic integral} \kwd{polynomial} \kwd{Burkholder-Davis-Gundy inequalities} \kwd{moments of stochastic integrals}   \kwd{iterated stochastic integral} \kwd{stochastic processes}
\end{keyword}

\end{frontmatter}

\section{Introduction}\label{Introduction}

Connections between special algebraic polynomials and stochastic integrals have a long history (see \cite{Wiener_Chaos}, \cite{Ito_1951}), and received considerable attention in stochastic analysis (\cite{Ikeda_Watanabe}, \cite{Carlen91}, \cite{Borodin_Salminen}, \cite{Nualart_Schoutens}). Fruitful applications of special polynomials have been found in the theory of Markov processes (\cite{Kendall_1959}, \cite{Karlin_McGregor_57}), financial mathematics (\cite{Schoutens}), statistics (\cite{Diaconis_Zabell}). The book \cite{Schoutens} contains an extensive overview of this field of stochastic analysis and its applications.

In this paper, we study a different type of application of polynomials to stochastic integration. We show that not only properties of special systems of orthogonal polynomials can be used in stochastic analysis, but also that elementary properties of certain general classes of polynomials can be effectively utilized. In particular, we give a new proof of the Burkholder-Davis-Gundy inequality in Section \ref{Section2} as an illustration of our approach. Some possible extensions and further applications of our results are described in Section \ref{Section3}, were we also prove the B-D-G inequality for the case of stochastic integrals with respect to locally square integrable martingales.

\section{The main idea}\label{Section2}

We propose an algebraic proof for the following classic variation of the B-D-G inequality.

\begin{theorem}\label{BDG}
Let $b(s)$, $s \in [0, t]$, be a progressively measurable process, $b \in \mathcal{L}_{2} [0, T]$, $t \leq T$. Then for every $n \geq 2$ there exists constants $C_1 > 0$, $C_2 > 0$ such that

\begin{equation}\label{1}
C_1 \,\mathbb{E} {\biggr(\int_{0}^{t} b^2 (s)\,ds \biggr)}^n \,\leq \quad\mathbb{E} {\biggr(\int_{0}^{t} b(s)\,d W(s) \biggr)}^{2n} \,\leq \quad C_2 \,\mathbb{E} {\biggr(\int_{0}^{t} b^2 (s)\,ds \biggr)}^n\,.
\end{equation}

\noindent The constants $C_1$ and $C_2$ depend on $n$, but not on the process $b$.
\end{theorem}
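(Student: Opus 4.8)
The plan is to reduce the two-sided estimate to a single universal comparison between $\mathbb{E}[M_t^{2n}]$ and $\mathbb{E}[A_t^n]$, where $M_t = \int_0^t b(s)\,dW(s)$ is the stochastic integral and $A_t = \int_0^t b^2(s)\,ds = \langle M\rangle_t$ is its quadratic variation. First I would apply It\^o's formula repeatedly to the even powers $M_t^{2n}$. Since $M$ is a local martingale with $dM_t = b(s)\,dW(s)$ and $d\langle M\rangle_t = b^2(s)\,ds$, the martingale part integrates to zero in expectation and one is left with the exact algebraic identity
\[
\mathbb{E}[M_t^{2n}] = \sum_{k=1}^{n}(-1)^{k+1}\frac{(2n)!}{2^{k}\,k!\,(2n-2k)!}\,\mathbb{E}\!\left[M_t^{2n-2k}A_t^{k}\right],
\]
which is exactly the statement that the generalized Hermite polynomial $H_{2n}(M_t,A_t)$ is a martingale with vanishing expectation. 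The combinatorial coefficients appearing here are precisely those of the Hermite family, and it is the qualitative root structure of this classical class of polynomials (all roots real, simple, and symmetric) that the method is designed to exploit.

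Next I would convert this identity into inequalities. The mixed moments on the right are all nonnegative, since $M_t^{2n-2k}\ge 0$ and $A_t^{k}\ge 0$, and by H\"older's inequality each obeys $\mathbb{E}[M_t^{2n-2k}A_t^{k}] \le (\mathbb{E}[M_t^{2n}])^{(n-k)/n}(\mathbb{E}[A_t^n])^{k/n}$, together with the matching lower control. Using the homogeneity of the estimate in $b$ to normalize $\mathbb{E}[A_t^n]=1$, and writing $x = (\mathbb{E}[M_t^{2n}])^{1/n}\ge 0$, the identity together with these positivity and H\"older relations forces $x$ to lie in the feasible set of an associated polynomial inequality $P_n(x)\ge 0$ whose coefficients are the Hermite coefficients above. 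I would then show that the admissible values of $x$ form a bounded subinterval of $(0,\infty)$ whose endpoints are roots of $P_n$; the qualitative localization theory for the underlying polynomial class guarantees that these extremal roots are real and strictly positive, and they supply the constants $C_1$ and $C_2$. Because the entire argument takes place at the level of normalized moments and the polynomial $P_n$ is universal, the resulting constants depend only on $n$ and not on the process $b$.

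I expect the main obstacle to be the alternating signs in the Hermite identity: the negative-sign mixed moments rule out any naive term-by-term majorization, so the crux is to prove that the exact identity together with the positivity and H\"older constraints genuinely confines $x$ to a compact subinterval of $(0,\infty)$, rather than merely bounding it on one side. This is precisely where I would rely on the qualitative placement of the roots of $P_n$ (reality, simplicity, and strict positivity of the extremal roots), which is the heart of the algebraic method. A secondary technical point is the promised extension to locally square integrable martingales, which I would handle by a localization and stopping-time argument ensuring that all the moments above are finite and that the It\^o expansion is justified before passing to the limit.
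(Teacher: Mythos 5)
Your skeleton is the same as the paper's: vanishing expectation of the Hermite martingale $H_{2n}\bigl(\int b^2\,ds,\int b\,dW\bigr)$, nonnegativity of the mixed moments, H\"older's inequality, and reduction to a polynomial inequality in the normalized ratio $z = \mathbb{E}^{1/n}\bigl(\int b\,dW\bigr)^{2n}\big/\,\mathbb{E}^{1/n}\bigl(\int b^2\,ds\bigr)^{n}$. However, the step you yourself flag as the crux — that the identity plus positivity plus H\"older confines $z$ to a compact subinterval of $(0,\infty)$ — has two genuine gaps as you have set it up. First, H\"older provides no ``matching lower control'': the only available lower bound on a mixed moment $\mathbb{E}\bigl[M_t^{2n-2k}A_t^k\bigr]$ is zero, and the paper's actual mechanism is correspondingly cruder: it discards the positively-signed terms of the identity (except $k=0$ and, when its sign permits, $k=n$) and applies the H\"older upper bound only to the negatively-signed terms, which then sit on the correct side of the inequality. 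Second, and more seriously, a single polynomial inequality pins $z$ into a compact subinterval of $(0,\infty)$ only when $n$ is even. The $k=n$ term carries sign $(-1)^n$: for even $n$ it enters with $+a_n$ and supplies the constant term in $a_0 z^n + a_n \le \sum_{\text{odd}\,k} a_k z^{n-k}$ (the paper's \eqref{8}), which is precisely what forces $z$ away from zero via Lemma~\ref{Lemma_2}; for odd $n$ it enters negatively, the analogous selection yields only \eqref{11}, whose solution set contains $z=0$, and hence gives no lower bound at all. The paper must therefore split into parity cases and, for odd $n$, run a second, different selection (keep the even-$k$ terms together with the $k=n$ term), leading to \eqref{13}--\eqref{14}. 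Your proposal is missing this case split, and without it the lower bound for odd $n$ does not follow.

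A further, related inaccuracy: the root structure you invoke is the wrong one. After the discarding and H\"older steps, the confining polynomials are not Hermite polynomials, so reality, simplicity and symmetry of Hermite zeros are not the relevant input; what is needed — and what the paper isolates as the elementary Lemmas \ref{Lemma_2} and \ref{Lemma_3} — are sign-and-growth facts about comparing an even polynomial with positive coefficients (and positive constant term) against an odd polynomial of lower degree. The genuine connection to Hermite zeros, namely that the optimal $C_1$ and $C_2$ are the $2n$-th powers of the extreme positive roots of $H_{2n}$, is a theorem of Davis cited in Section~\ref{Section3}; neither the paper's argument nor your sketch yields it, so it cannot be used as the justification for the confinement step.
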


The key feature of our approach is that we use general qualitative properties of roots of algebraic polynomials. The idea of proving the Burkholder-Davis-Gundy inequality via the use of Hermitian polynomials has been already explored by different authors (see, for example, \cite{McKean_Stochastic_Integrals}, pp.40-41 or  \cite{Ikeda_Watanabe}). However, the previous proofs used properties of polynomials in a different way and worked only for $n \leq 4$. Our proof is valid for general $n$ and, moreover, can be extended to give estimates for more general types of stochastic integrals.

The following two algebraic lemmas are in the core of our method.

\begin{lemma}\label{Lemma_2}
Consider real polynomials

\begin{equation}\label{9}
P_1 (z) \,=\, \sum_{k=0}^{2m} b_k z^{2k} \quad\textit{and}\quad  P_2 (z) \,=\, z \sum_{i=0}^{2 m_1} c_i z^{2i} \,,
\end{equation}


\noindent where $m_1 < m$ is integer and nonnegative, $b_k \geq 0$ for all $k$, $b_0 > 0$, $b_m > 0$, $c_i \geq 0$ for all $i$. Then there exists $0 < d_1 < d_2$ such that only for $z \in [d_1, d_2]$ one can have $P_1(z) \leq P_2(z)$, but for $z \notin [d_1, d_2]$ one has $P_1(z) > P_2(z)$.
\end{lemma}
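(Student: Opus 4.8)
The plan is to reduce the two-sided polynomial comparison to the half-line $z>0$ and then to control $Q:=P_1-P_2$ at the two ends of that half-line, thereby showing that the exceptional set $S=\{z\in\mathbb{R}:P_1(z)\le P_2(z)\}$ is a compact subset of $(0,\infty)$ and hence contained in some $[d_1,d_2]$ with $0<d_1<d_2$.

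First I would dispose of $z\le 0$ by parity. Since $P_1$ contains only even powers with nonnegative coefficients and $b_0>0$, we have $P_1(z)\ge b_0>0$ for \emph{every} real $z$. Since $P_2(z)=z\sum_{i=0}^{2m_1}c_i z^{2i}$ with all $c_i\ge 0$, the even factor $\sum_i c_i z^{2i}$ is nonnegative, so $P_2(z)\le 0$ for $z\le 0$. Thus $P_1(z)>0\ge P_2(z)$ on $(-\infty,0]$, and $S\subseteq(0,\infty)$. It remains to bound $S$ away from both $0$ and $+\infty$.

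Next I would examine the two ends on $(0,\infty)$ through the continuous function $Q=P_1-P_2$. Near $0$ this is immediate: $Q(0)=b_0>0$, so by continuity there is $d_1>0$ with $Q>0$ on $[0,d_1)$, whence $S\cap(0,d_1)=\emptyset$. Near $+\infty$ the argument rests on a degree comparison: because $m>m_1$ and the leading coefficient $b_{2m}$ of $P_1$ is positive, we have $\deg P_1=4m$, while $\deg P_2\le 4m_1+1\le 4m-3$. Hence the top-degree term of $Q$ is that of $P_1$, with positive coefficient, so $Q(z)\to+\infty$ and there is $d_2>d_1$ with $Q>0$ on $(d_2,\infty)$. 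Combining the three steps yields $S\subseteq[d_1,d_2]$, which is the assertion.

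I expect the substantive obstacle to be exactly the behaviour at $+\infty$: one must know that $P_1$ genuinely outgrows $P_2$, i.e. that $\deg P_1>\deg P_2$, and this is precisely where positivity of the \emph{leading} coefficient of $P_1$ is needed (mere positivity of an interior coefficient does not suffice, as a term of $P_2$ of degree exceeding the top present even power of $P_1$ would drive $Q$ to $-\infty$). If in addition one wants the sharper conclusion that $S$ is a single interval, the crux becomes a root count for $Q$ on $(0,\infty)$. A promising route is to study $R:=P_2/P_1$, whose derivative has the sign of the Wronskian-type numerator $N:=P_2'P_1-P_2P_1'=\sum_{i,k}(2i+1-2k)\,b_k c_i\,z^{2(i+k)}$. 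Since $P_1,P_2'$ are even and $P_1',P_2$ odd, $N$ is even, so writing $u=z^2$ reduces matters to the positive roots of $\tilde N(u)=N$; unimodality of $R$ (a single sign change $+\to-$ of $\tilde N$) would give that $\{R\ge 1\}=S$ is an interval. The delicacy is that the coefficients of $\tilde N$ are not sign-definite, so this refinement, unlike the containment above, would require invoking the finer even/odd degree structure supplied by the application.
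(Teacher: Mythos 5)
Your proof follows essentially the same route as the paper's: parity and sign considerations eliminate $z\le 0$, the value $P_1(0)=b_0>0$ together with continuity excludes a neighbourhood of the origin, and a growth comparison at $+\infty$ bounds the exceptional set $S$ above. Your version is in fact more careful at the lower end: the paper passes directly from ``all solutions are positive'' to ``bounded from below by a positive number $d_1$'', a step your continuity-at-zero argument makes honest. The one point where you deviate from the literal statement is exactly the point you flag: you invoke positivity of the \emph{leading} coefficient $b_{2m}$, whereas the lemma assumes only $b_0>0$ and $b_m>0$, an interior coefficient (the sum runs to $k=2m$ with $b_{2m}\ge 0$ merely), and without your strengthening the lemma as stated is false. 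Take $m=2$, $m_1=1$, $P_1(z)=1+z^4$ (so $b_0=b_2=1$, all other $b_k=0$) and $P_2(z)=z^5$ (so $c_2=1$, $c_0=c_1=0$): every stated hypothesis holds, yet $P_1(z)<P_2(z)$ for all $z\ge 2$, so no $d_2$ exists. The paper's own proof makes the same implicit repair you make explicitly: it asserts $P_1(z)\sim b_m z^{2m}$ as $z\to\infty$ (true only when the coefficients above $b_m$ vanish, as in the application, where $P_1(z)=a_0z^n+a_n$) and then $P_1/P_2\to\infty$; it even writes ``$\deg P_1(z)<\deg P_2(z)$'', an evident typo for ``$>$''. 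So your diagnosis --- that the entire content of the lemma at $+\infty$ is $\deg P_1>\deg P_2$, and that positivity of an interior coefficient cannot supply it --- is correct, and the hypothesis you substitute is the one the application in Theorem~\ref{BDG} actually provides. Your closing remark is also correctly calibrated: the lemma claims only the containment $S\subseteq[d_1,d_2]$, not that $S$ is an interval, so the Wronskian-type refinement you sketch is not needed, and neither the paper's proof nor its use of the lemma relies on it.
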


\begin{proof} (Lemma \ref{Lemma_2}) Note first that $P_1(z)$ is symmetric, $P_1(z) \geq b_0$ for all $z \in \mathbb{R}$, and $P_1(z) \sim b_m z^{2m}$ as $z \rightarrow \infty$. Furthermore, $P_2 (-z) = - P_2 (z)$, and for $z \geq 0$ one has $P_2(z) \geq 0$, $P_2 (0) = 0$, and $deg\,P_1(z) < deg\,P_2(z)$, where $deg\,P_i$ denotes the degree of the polynomial $P_i$.

This implies that for $z < 0$ one has $P_1(z) > 0 > P_2(z)$. At $z = 0$ it holds that $P_1(0) = b_0 > 0 = P_2 (z)$. This shows that all possible solutions of the inequality $P_1(z) \leq P_2(z)$ are positive, i.e. bounded from below by a positive number $d_1$.

Since $P_1(z)/P_2(z) \rightarrow \infty$ as $z \rightarrow \infty$, it follows that for sufficiently large $z \geq z_0$ always $P_1(z) > P_2(z)$. Therefore, all possible solutions of the inequality $P_1(z) \leq P_2(z)$ lays in some interval $[d_1, d_2]$ with $d_1 > 0$ and $d_2 > 0$.

\end{proof}


\begin{lemma}\label{Lemma_3}
Consider real polynomials

\begin{equation}\label{12}
P_1 (z) \,=\, b_0 z^{2 m + 1} \quad\textit{and}\quad  P_2 (z) \,=\, z \sum_{i=0}^{m_1} c_i z^{2i} \,,
\end{equation}

\noindent where $m_1 < m$ is integer and nonnegative, $b_0 > 0$, $c_i \geq 0$ for all $i$, $c_0 > 0$, $c_{m_1} > 0$. Then there exists $d_2 > 0$ such that only for $z \in [-\infty, d_2]$ one can have $P_1(z) \leq P_2(z)$, but for $z > d_2$ one always has $P_1(z) > P_2(z)$.
\end{lemma}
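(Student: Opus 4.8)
The plan is to study the sign of the difference $D(z) := P_1(z) - P_2(z) = b_0 z^{2m+1} - \sum_{i=0}^{m_1} c_i z^{2i+1}$. Since every term here is an odd power of $z$, this factors as $D(z) = z\,S(z^2)$, where I set $S(w) := b_0 w^{m} - \sum_{i=0}^{m_1} c_i w^{i}$, a genuine polynomial in the variable $w = z^2$ because $m_1 < m$ guarantees that no two exponents collide. For $z > 0$ the leading factor $z$ is positive, so on the positive axis the desired inequality $P_1(z) > P_2(z)$ is equivalent to $S(z^2) > 0$; the threshold will turn out to be $d_2 = \sqrt{w^\ast}$ for a suitable $w^\ast > 0$. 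In this way the whole problem is reduced to locating the sign changes of the single polynomial $S$ on $[0,\infty)$.

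First I would record two endpoint facts: $S(0) = -c_0 < 0$ (using $c_0 > 0$), while $S(w) \sim b_0 w^m \to +\infty$ as $w \to \infty$ (using $b_0 > 0$ together with $m > m_1$). Hence $S$ has at least one positive root, and being a polynomial it has a largest positive root $w^\ast$, beyond which $S$ stays positive. Setting $d_2 = \sqrt{w^\ast}$ then closes the argument: for $z > d_2$ we have $z^2 > w^\ast$, so $S(z^2) > 0$, and since also $z > 0$ we get $D(z) > 0$, i.e. $P_1(z) > P_2(z)$. This is precisely the contrapositive of the asserted statement that $P_1 \le P_2$ can hold only for $z \le d_2$.

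To pin down a single clean threshold (so that $S < 0$ below $w^\ast$ and $S > 0$ above it, rather than merely "eventually positive"), I would show that $S$ has exactly one positive root. The device is to compare $b_0 w^m$ with $g(w) := \sum_{i=0}^{m_1} c_i w^i$ through the ratio $h(w) := b_0 w^m / g(w)$; note $g(w) > 0$ for $w \ge 0$ since $c_0 > 0$ and all $c_i \ge 0$. Differentiating the logarithm gives $h'(w)/h(w) = m/w - g'(w)/g(w)$, and the quantity $m\,g(w) - w\,g'(w) = \sum_{i=0}^{m_1}(m-i)\,c_i w^i$ is strictly positive for every $w \ge 0$, because each weight $m - i > 0$ (as $i \le m_1 < m$) and the $i=0$ term contributes $m c_0 > 0$. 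Thus $h$ is strictly increasing on $(0,\infty)$, runs from $0$ up to $+\infty$, and meets the level $1$ exactly once; that crossing is the unique root $w^\ast$ of $S$, with $S < 0$ to its left and $S > 0$ to its right.

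I expect the only genuine obstacle to be this last step, the uniqueness of the positive root: the endpoint-and-degree reasoning alone yields existence and eventual positivity but not a single well-defined threshold. The monotone-ratio computation is what forces the clean structure, and it is exactly where the full strength of the hypotheses $c_i \ge 0$, $c_0 > 0$ and $m_1 < m$ is used. Everything else is bookkeeping on the sign of the odd factor $z$, completely analogous to the sign discussion already carried out for Lemma \ref{Lemma_2}.
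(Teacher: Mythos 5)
Your proof is correct. Be aware, though, of what the paper actually does here: its entire ``proof'' of Lemma~\ref{Lemma_3} is the remark that the argument is analogous to Lemma~\ref{Lemma_2}, i.e.\ it rests on the purely qualitative observation that $\deg P_1 = 2m+1 > 2m_1+1 = \deg P_2$ with positive leading coefficient, so $P_1(z)/P_2(z)\to\infty$ as $z\to\infty$, hence $P_1>P_2$ for all sufficiently large $z$, and the solution set of $P_1\le P_2$ is bounded above. Your first two paragraphs are a self-contained, explicit version of exactly this idea: the factorization $D(z)=z\,S(z^2)$ together with $d_2=\sqrt{w^\ast}$, where $w^\ast$ is the largest positive root of $S$, already finishes the proof, because the lemma only asserts \emph{containment} of the solution set in $(-\infty,d_2]$ (``only for $z \in [-\infty, d_2]$ one \emph{can} have $P_1\le P_2$''), not that this half-line is exactly the solution set. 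Where you genuinely go beyond the paper is the third paragraph: the log-derivative computation showing $m\,g(w)-w\,g'(w)=\sum_{i=0}^{m_1}(m-i)c_i w^i>0$, hence that $S$ has a unique positive root. That argument is correct and buys a sharper conclusion --- on the positive axis the solution set is exactly $(0,d_2]$ --- but it is not needed for the statement, and your framing slightly overstates its role: since a polynomial has constant sign beyond its largest root, the threshold $w^\ast$ from your second paragraph is already a single well-defined cutoff, so there was no gap left to close. In short, your paragraphs 1--2 reproduce the substance of the paper's (implicit) argument in a more explicit form, and paragraph 3 is a correct extra refinement that the paper does not contain.
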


\begin{proof} (Lemma \ref{Lemma_3}) The proof is analogous to the one of Lemma \ref{Lemma_2}. \end{proof}


Now we are able to prove the main theorem.

\begin{proof}(Theorem \ref{BDG}).
In the proof below we can assume that $b$ is bounded, since the general case follows by the usual truncation argument.

We denote for brevity

\[
\int_{0}^{t} b^2 (s)\,ds \,=\, \int b^2 \,ds\,, \quad\quad  \int_{0}^{t} b(s)\,d W(s)\,=\, \int b\,dW\,.
\]

\noindent Let us write for $n \geq 1$

\begin{eqnarray}\label{2}
\nonumber  \rho_{2n}(t) &=& H_{2n}\biggr(\int b^2 \,ds\,,\int b\,dW\,\biggr) \\
    &=& \sum_{0 \leq k \leq n} {(-1)}^{k} a_k {\biggr(\int b\,dW\,\biggr)}^{2n-2k} {\biggr(\int b^2 \,ds\,\biggr)}^{k} \,,
\end{eqnarray}

\noindent where $H_{2n}$ stands for the $2n-$th Hermitian polynomial defined as

\[
H_{2n} (x, y) \,:=\, \frac{{(-x)}^{n}}{n!} \, e^{y^2 / 2x} \, \frac{d^n}{dy^n} \, e^{-y^2 / 2x}
\,=\, \sum_{0 \leq k \leq n} {(-1)}^{k} a_k \, x^{k} y^{2n-2k}  \,,
\]

\noindent and it is known that

\[
a_k \,=\, \frac{1}{\,2^k k! (2n-2k)!\,} \,.
\]

\noindent Taking in (\ref{2}) the expectation of both sides and noting that $\mathbb{E}\,\rho_{2n} = 0$ (see \cite{McKean_Stochastic_Integrals}, pp. 37-38 or \cite{Ikeda_Watanabe}, pp. 150-152), we get


\begin{equation}\label{3}
\sum_{0 \leq k \leq n} {(-1)}^{k} a_k \mathbb{E} \biggr\{{\biggr(\int b\,dW\,\biggr)}^{2n-2k} {\biggr(\int b^2 \,ds\,\biggr)}^{k}\biggr\} \,=\, 0 \,.
\end{equation}


\noindent By H\"{o}lder's inequality, for all $1 \leq k \leq n$

\begin{eqnarray}\label{4}
           \mathbb{E} \biggr\{{\biggr(\int b\,dW\,\biggr)}^{2n-2k} {\biggr(\int b^2 \,ds\,\biggr)}^{k}\biggr\} & \leq &   \\
\nonumber     & \leq &  {\mathbb{E}}^{\frac{\,n-k\,}{\,n\,}} {\biggr(\int b\,dW\,\biggr)}^{2n}  {\mathbb{E}}^{\frac{\,k\,}{\,n\,}} {\biggr(\int b^2 \,ds\,\biggr)}^{n} \,.
\end{eqnarray}


\noindent \textbf{\emph{Part} I}. Consider first the case of even $n$, and let $n = 2m$ in (\ref{1}). Since $a_k \geq 0$ for all $k$, and also

\[
\mathbb{E} \biggr\{{\biggr(\int b\,dW\,\biggr)}^{2n-2k} {\biggr(\int b^2 \,ds\,\biggr)}^{k}\biggr\} \, \geq \, 0
\]

\noindent for all $k$, after throwing out from (\ref{3}) all the summands with even $k$, except for $k = 0$ and $k = n$, we get

\begin{eqnarray}\label{5}
\nonumber  a_0\, \mathbb{E} {\biggr(\int b\,dW\,\biggr)}^{2n} &-& \sum_{0 \leq 2 l + 1 \leq n} a_{2 l + 1} \mathbb{E} \biggr\{{\biggr(\int b\,dW\,\biggr)}^{2n - 2 k(l)} {\biggr(\int b^2 \,ds\,\biggr)}^{k(l)}\biggr\} \\
    &+& a_n \, \mathbb{E} {\biggr(\int b^2 \,ds\,\biggr)}^{n} \quad\quad \leq \quad\quad 0 \,,
\end{eqnarray}

\noindent where for integer $l \geq 0$ we denoted $k(l) = 2 l + 1$.

Applying inequality (\ref{4}) to (\ref{5}), we get

\begin{eqnarray}\label{6}
\nonumber  a_0\, \mathbb{E} {\biggr(\int b\,dW\,\biggr)}^{2n} &-& \sum_{0 \leq 2 l + 1 \leq n} a_{2 l + 1} {\mathbb{E}}^{\frac{\,n - k(l)\,}{\,n\,}} {\biggr(\int b\,dW\,\biggr)}^{2n} {\mathbb{E}}^{\frac{\,k(l)\,}{\,n\,}} {\biggr(\int b^2 \,ds\,\biggr)}^{n} \\
    &+& a_n \, \mathbb{E} {\biggr(\int b^2 \,ds\,\biggr)}^{n} \quad\quad \leq \quad\quad 0 \,.
\end{eqnarray}

\noindent Divide both parts of (\ref{6}) by $\mathbb{E} \bigr(\int b^2 \,ds\,\bigr)^{n}$ and put

\begin{equation}\label{7}
    z \, := \, \frac{ \,\, {\mathbb{E}}^{1/n} {\bigr(\int b\,dW\,\bigr)}^{2n} \, }{ \, {\mathbb{E}}^{1/n} {\bigr(\int b^2 \,ds\,\bigr)}^{n} \, } \,,
\end{equation}

\noindent then we obtain

\[
a_0\, z^n \,-\, \sum_{0 \leq 2 l + 1 \leq n} a_{2 l + 1} z^{n - k(l)} \,+\,a_n \, \leq \, 0\,,
\]

\noindent or equivalently

\begin{equation}\label{8}
    a_0\, z^n \, +\,a_n \, \leq \, \sum_{0 \leq 2 l + 1 \leq n} a_{2 l + 1} z^{n - k(l)} \,.
\end{equation}

Let us now put in (\ref{8}) $P_1(z) = a_0\, z^n \, +\,a_n$, $P_2(z) = \sum_{0 \leq 2 l + 1 \leq n} a_{2 l + 1} z^{n - k(l)}$. By Lemma \ref{Lemma_2}, there exists positive constants $C_1$, $C_2$ such that $0 < C_1 \leq z \leq C_2$, i.e. $C_1^n \leq z^n \leq C_2^n$, and this proves (\ref{1}) for the case of $n = 2 m$.
\\

\noindent \textbf{\emph{Part} II}. Consider now the case of odd $n$, and let $n = 2m + 1$ in (\ref{1}). Throwing away from (\ref{3}) all the summands with even $k$, except for $k = 0$, we get

\begin{equation}\label{10}
    a_0\, \mathbb{E} {\biggr(\int b\,dW\,\biggr)}^{2n} \,-\,
    \sum_{0 \leq 2 l + 1 \leq n} a_{2 l + 1} \mathbb{E} \biggr\{{\biggr(\int b\,dW\,\biggr)}^{2n - 2 k(l)} {\biggr(\int b^2 \,ds\,\biggr)}^{k(l)}\biggr\} \,\leq\, 0 \,,
\end{equation}

\noindent and analogously to (\ref{8}) we derive

\begin{equation}\label{11}
    a_0\, z^n \, \leq \, \sum_{0 \leq 2 l + 1 \leq n} a_{2 l + 1} z^{n - k(l)} \,,
\end{equation}

\noindent where $z$ is defined by (\ref{7}).

After applying Lemma \ref{Lemma_3} to $ P_1(z) = a_0\, z^n $ and $ P_2(z) = \sum_{0 \leq 2 l + 1 \leq n} a_{2 l + 1} z^{n - k(l)} $ in (\ref{11}), we obtain from (\ref{11}) that $z \leq d_2$ for some positive $d_2$. Since $n$ is odd, this implies $z^n \leq d_2^n$ and the upper bound in (\ref{1}) follows.

It remains only to prove the lower bound in (\ref{1}) for $n = 2m + 1$. In this case, we leave in (\ref{3}) only the summands with even $k$ and $k = n$, thus getting

\begin{equation}\label{13}
    \sum_{0 \leq 2k < n} a_{2k} \, \mathbb{E} \biggr\{{\biggr(\int b\,dW\,\biggr)}^{2n - 4k} {\biggr(\int b^2 \,ds\,\biggr)}^{2k}\biggr\}
    \, - \, a_n \, \mathbb{E} \biggr\{{\biggr(\int b^2 \,ds\,\biggr)}^{n}\biggr\} \, \geq \ 0 \,.
\end{equation}

\noindent Analogously to our previous derivations, this implies the inequality

\[
\sum_{0 \leq 2k < n} a_{2k} \, z^{n - 2k} \, - \, a_n \, \geq \, 0\,, \quad \textit{\emph{i.e.}}
\]

\begin{equation}\label{14}
    P(z) \, := \, \sum_{0 \leq 2k < n} a_{2k} \, z^{n - 2k} \, \geq \, a_n \,,
\end{equation}

\noindent where $z$ is again as in (\ref{7}). Since $P(z)$ is a polynomial of the form $\sum_{i=1}^{m} b_i z^{2i + 1}$, it easily follows that (\ref{14}) is equivalent to $z \geq C_1$ for some constant $C_1 = C_1 (n) > 0$. Therefore, $z^n \geq C_1^n$, and the lower bound in (\ref{1}) is proved for $n = 2m + 1$.
\end{proof}

\section{Possible extensions}\label{Section3}

As an immediate extension of the above result, we prove the following more general theorem. This will not be the most general framework where the B-D-G inequality holds or where our approach could work; we decided to stick to the stochastic integrals version for uniformity of presentation.

\begin{theorem}\label{Theorem2}
Let a real process $(M(s))_{s \geq 0}$ be a locally square integrable martingale and $\langle M \rangle$ be its quadratic variation process. Let $b(s)$, $s \in [0, t]$, be a bounded progressively measurable process, $b \in \mathcal{L}_{2}^{loc} [0, T]$, $t \leq T$. Then for every $n \geq 2$ there exists constants $C_1 > 0$, $C_2 > 0$ such that

\begin{equation}\label{15}
C_1 \,\mathbb{E} {\biggr(\int_{0}^{t} b^2 (s)\,d \langle M \rangle (s) \biggr)}^n \,\leq \quad\mathbb{E} {\biggr(\int_{0}^{t} b(s)\,d M(s) \biggr)}^{2n} \,\leq \quad C_2 \,\mathbb{E} {\biggr(\int_{0}^{t} b^2 (s)\,d \langle M \rangle (s) \biggr)}^n\,.
\end{equation}

\noindent The constants $C_1$ and $C_2$ depend on $n$, but not on the processes $b$ and $M$.
\end{theorem}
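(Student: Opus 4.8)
The plan is to lean on the proof of Theorem~\ref{BDG} and to isolate the one place where Brownian motion was genuinely used. Running through Parts I and II, the argument rests on three ingredients: the nonnegativity of each mixed moment $\mathbb{E}\{(\int b\,dM)^{2n-2k}(\int b^2\,d\langle M\rangle)^{k}\}$, which is automatic because $2n-2k$ is even and $\int b^2\,d\langle M\rangle\geq 0$; H\"{o}lder's inequality~(\ref{4}), which concerns expectations only and is blind to the integrator; and the algebraic Lemmas~\ref{Lemma_2} and~\ref{Lemma_3}, which are pure statements about polynomials. The sole genuinely Brownian input is the orthogonality relation~(\ref{3}), i.e. $\mathbb{E}\,\rho_{2n}=0$. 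Hence it is enough to re-derive this single identity with $\int b\,dW$ replaced by $\int b\,dM$ and $\int b^2\,ds$ by $\int b^2\,d\langle M\rangle$; thereafter the whole chain~(\ref{5})--(\ref{14}) transcribes verbatim.

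First I would set $X_t:=\int_0^t b\,dM$. Since $b$ is bounded and $M$ is locally square integrable, $X$ is again a locally square integrable martingale with $\langle X\rangle_t=\int_0^t b^2\,d\langle M\rangle$, so the quantity to control is
\[
\rho_{2n}(t)=H_{2n}\bigl(\langle X\rangle_t,\,X_t\bigr).
\]
When $M$, and hence $X$, is \emph{continuous} this is immediate. By the Dambis--Dubins--Schwarz theorem $X_t=B_{\langle X\rangle_t}$ for a Brownian motion $B$, so that $\rho_{2n}(t)=H_{2n}(\tau,B_\tau)\big|_{\tau=\langle X\rangle_t}$ is a time change of the classical Brownian Hermite martingale and therefore a local martingale; boundedness of $b$ and local square integrability, via a localization argument, upgrade it to a true martingale and yield $\mathbb{E}\,\rho_{2n}(t)=\rho_{2n}(0)=0$. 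Equivalently one applies It\^{o}'s formula directly and invokes the heat-equation identity $\partial_x H_{2n}+\tfrac12\,\partial_{yy}H_{2n}=0$, whereby the finite-variation part cancels and only a stochastic integral against $dX$ survives.

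The main obstacle is the discontinuous case, and it is a real obstruction rather than a technical nuisance. If $M$ jumps, It\^{o}'s formula for $H_{2n}(\langle X\rangle_t,X_t)$ produces jump terms that the above cancellation does not absorb, and $\mathbb{E}\,\rho_{2n}$ need not vanish: for the compensated Poisson process $M_t=N_t-\lambda t$ with $b\equiv 1$ one computes $\mathbb{E}\,\rho_4=\lambda t/24\neq 0$. The same example shows that~(\ref{15}) itself fails for discontinuous martingales when the \emph{predictable} quadratic variation $\langle M\rangle$ is used, since $\mathbb{E}(\int b\,dM)^{4}/\mathbb{E}(\int b^2\,d\langle M\rangle)^{2}=1/(\lambda t)+3\to\infty$ as $\lambda t\to 0$. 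Thus the theorem is genuinely a statement about continuous $M$ (where $\langle M\rangle=[M]$), for which the argument above is complete. An honest discontinuous analogue would require either passing to the optional quadratic variation $[M]$, or replacing the Hermite family by the polynomials orthogonal to the law of the driving martingale (Charlier for Poisson, Meixner, and so on) and re-checking that the resulting coefficients still meet the sign and degree hypotheses of Lemmas~\ref{Lemma_2} and~\ref{Lemma_3}; establishing that last point in the needed generality is where the genuine work would lie.
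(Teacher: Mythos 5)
Your argument for continuous $M$ is, in substance, the paper's own proof. The paper likewise reduces Theorem \ref{Theorem2} to the bracket identity $\langle \int b\,dM \rangle = \int b^2\,d\langle M\rangle$ (cited from Protter) plus the single identity $\mathbb{E}\,\rho_{2n} = 0$ (cited from Theorem 5.1 of Ikeda--Watanabe), and then transfers the proof of Theorem \ref{BDG} verbatim; where you invoke Dambis--Dubins--Schwarz or the heat equation $\partial_x H_{2n} + \frac{1}{2}\partial_{yy} H_{2n} = 0$, the paper invokes a citation. The genuinely different part of your proposal is the diagnosis of the discontinuous case, and it is correct and valuable: the Hermite-martingale identity the paper cites holds only for continuous integrators (with jumps, It\^{o}'s formula leaves uncancelled jump terms), yet Theorem \ref{Theorem2} is stated for arbitrary locally square integrable martingales. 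Your numbers check out: for $M_t = N_t - \lambda t$, $b \equiv 1$, $\mu := \lambda t$, one has $\langle M\rangle_t = \mu$, $\mathbb{E}\,\rho_4 = \frac{1}{24}(\mu + 3\mu^2) - \frac{1}{4}\mu^2 + \frac{1}{8}\mu^2 = \mu/24 \neq 0$, and
\[
\frac{\mathbb{E}\,(M_t)^4}{\left(\mathbb{E} \int_0^t b^2\,d\langle M\rangle\right)^{2}} \,=\, \frac{\mu + 3\mu^2}{\mu^2} \,=\, \frac{1}{\mu} + 3 \,\longrightarrow\, \infty
\]
as $\mu \to 0$ (fix $t = T$ and let $\lambda \to 0$), so no constant $C_2$ independent of $M$ can satisfy the upper bound in (\ref{15}). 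Thus, with $\langle M\rangle$ read as the predictable bracket — the standard meaning of the notation, and what the paper's formula $\langle M\rangle_t = \lambda t$ would give here — the theorem as printed is false, continuity is an essential hypothesis, and the paper's proof-by-citation conceals exactly the point your proof makes explicit.

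Two smaller remarks. First, reinterpreting $\langle M\rangle$ as the optional bracket $[M]$ would make the statement true (it becomes the classical Burkholder--Davis--Gundy inequality for $p = 2n$), but it would not rescue the method: for the same example $\mathbb{E}\,H_4([M]_t, M_t) = -\mu/12 \neq 0$, so the identity (\ref{3}) fails for either bracket, confirming your view that a discontinuous analogue needs different polynomials (Charlier, etc.) rather than a different bracket alone. Second, your localization step is phrased too loosely: rather than upgrading $\rho_{2n}$ itself to a true martingale (uniform integrability can fail, since $\langle M\rangle_t$ need not have finite moments even for bounded $b$), one should prove (\ref{15}) for the stopped martingale $M^{T_K}$ with constants independent of $K$ and let $K \to \infty$ via monotone convergence and Fatou. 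That gap is routine to fill and is no worse than the "usual truncation argument" the paper itself waves at.
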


%
%

\begin{proof}(Theorem \ref{Theorem2}). Let us write again


\begin{eqnarray}\label{16}
\nonumber  \rho_{2n}(t) &=& H_{2n}\biggr(\int_{0}^{t} b^2 (s) \,d \langle M \rangle (s)\,,\int_{0}^{t} b(s)\,d M(s)\,\biggr) \\
    &=& \sum_{0 \leq k \leq n} {(-1)}^{k} a_k {\biggr(\int b \,\,d M\,\biggr)}^{2n-2k} {\biggr(\int b^2 \,d \langle M \rangle \,\biggr)}^{k} \,,
\end{eqnarray}

\noindent where $H_{2n}$ is the $2n-$th Hermitian polynomial. By Theorem 29, pp. 75-76 of \cite{Protter_2004}, we have

\[
\bigr\langle \int b \,\,d M \bigr\rangle \,=\, \int b^2 \,d \, \langle M \rangle \,.
\]

%
%
%

%

\noindent From Theorem 5.1 on p. 152 of \cite{Ikeda_Watanabe}, we again see that $\mathbb{E}\,\rho_{2n} = 0$. Therefore, the proof of Theorem \ref{BDG} can be directly transferred to the present situation.

\end{proof}

In the above proof we have used only some elementary and entirely qualitative facts about certain general types of polynomials, together with such a crude technique as simple throwing out every second term in the martingale identity (\ref{3}). Nontheless, we were able to prove a rather general Burkholder-Davis-Gundy theorem. This suggests that our approach can lead to substantially stronger results in estimation of stochastic integrals with respect to Brownian motion, if we could incorporate in our method the existing quantitative estimates for roots of Hermitian polynomials. Indeed, as was shown in \cite{Davis_1976}, for $p = 2n,$ where $n$ is an integer, the best values for $C_1$ and $C_2$ in Theorem \ref{BDG} are $l_{2n}^{2n}$ and $r_{2n}^{2n}$ respectively, where $l_{2n}^{2n}$ and $r_{2n}^{2n}$ are the smallest and the largest positive roots of the Hermite polynomial $H_{2n}.$

Moreover, equation (\ref{2}) represents only one of the many polynomial-type martingales that can be composed from stochastic integrals. Other important examples are given by the connection between integrals with respect to the Poisson process and Charlier polynomials (see \cite{Bertoin_Levy}), the connection between integrals with respect to the Gamma process and Laguerre polynomials, and the link between Sheffer polynomials and several classes of L\'{e}vy processes. Many examples of such martingales can be found in \cite{Schoutens}. It seems plausible that one could modify the method of the present paper and prove estimates for more general stochastic integrals.  \\

\noindent {\bf Acknowledgments.} Author would like to thank Andrei Borodin for introducing him to this field of research, Sergio Albeverio for insightful discussions and the two anonymous referees for their comments that have lead to the improvement of the paper. Part of this research was done when the author was at the University of G\"{o}ttingen. \\

\bibliographystyle{plainnat}
\bibliography{Stochastic_Integrals}


\end{document}